 \numberwithin{equation}{section}
\theoremstyle{plain}
\newtheorem{theo}{Theorem}[section]
\newtheorem{lem}[theo]{Lemma}
\newtheorem{prop}[theo]{Proposition}
\newtheorem{claim}[theo]{Claim}
\newtheorem{definition}[theo]{Definition}
\theoremstyle{remark}
\newtheorem{rem}[theo]{Remark}
\newcommand{\RR}{{\mathbb{R}}}
\newcommand{\der}{\partial}
\newcommand{\ep}{\varepsilon}
\newcommand{\ov}{\overline}
\newcommand{\om} {\Omega}
\newcommand{\dive}{\text{\normalfont div}}
\def\qed{\hfill$\square$\vspace{0.5cm}}    
\begin{document}

\title[Stability for qPAT]{Stability for Quantitative Photoacoustic Tomography with well chosen illuminations}
\author[G. Alessandrini, M. Di Cristo, E. Francini, S. Vessella]{Giovanni Alessandrini}\address{Universit\`a di Trieste}\email{alessang@units.it}
\author[]{Michele Di Cristo}\address{Politecnico di Milano}\email{michele.dicristo@polimi.it}\author[]{Elisa Francini}\address{Universit\`a di Firenze}\email{elisa.francini@unifi.it}
\author[]{
Sergio Vessella}\address{Universit\`a di Firenze}\email{sergio.vessella@unifi.it}

\begin{abstract}
We treat the stability issue for the three dimensional inverse imaging modality called Quantitative Photoacoustic Tomography. We provide universal choices of the illuminations which enable to recover, in a H\"older stable fashion, the diffusion and absorption coefficients from the interior pressure data. With such choices of illuminations we do not need the nondegeneracy conditions commonly used in previous studies, which are difficult to be verified a-priori. \end{abstract}
\maketitle

\section{Introduction}
In recent years there has been an increasing interest towards  imaging methods which combine different physical modalities of interrogation which are known as hybrid, or coupled physics, inverse problems.


In this paper we concentrate on Photoacoustic Tomography (PAT) that couples the (high contrast) optical tomography with the (high resolution) ultrasound waves.
The first step of this imaging technique consists of reading the boundary response to an acoustic signal, in order to reconstruct
the absorbed energy distribution inside the biological tissue under inspection. We assume this first part as already performed and we focus on the second step of the procedure, which consists of reconstructing the
absorption and diffusion coefficients from measurements of the absorbed energy distribution obtained in the previous step. We refer to \cite{RGZ} for an extended bibliographical review on this problem known as Quantitative Photoacoustic Tomography (qPAT).

Let us denote by $\Omega\subset\mathbb R^n$  the body enclosing the biological tissue under inspection. Note that the physically significant space dimension is $n=3$, but, since we shall need also to refer to model cases when $n=2$, we shall leave $n\geq 2$ undetermined. We denote by  $u(x)$ the photon density at the point $x\in\om$.
Then $u(x)$ solves the following boundary value problem
\begin{equation}
\label{m1}
\left\{\begin{array}{ll}
-\dive( D\nabla u)+\sigma u=0 & \textrm{in }\Omega,\\[2mm]
u=g & \textrm{on }\partial \Omega,
\end{array}\right.\end{equation}
where
$D=D(x)>0$,  $\sigma=\sigma(x)$ are the diffusion and absorption coefficients, respectively,
and $g$ is the illumination source prescribed on the boundary.

The goal of qPAT is to recover information on the coefficients $D$, $\sigma$ from the knowledge
of the absorbed energy
\[H(x)=\sigma(x)u(x),\qquad x\in\ov\Omega,\]
possibly repeating the experiment with different profiles of the illumination $g$.

Note, incidentally, that a more accurate model would require the introduction of the additional multiplicative unknown parameter $\Gamma(x)$ called the Gr\"uneisen coefficient.
Here we adopt, for simplicity, the commonly used convention of assuming $\Gamma\equiv 1$. See \cite{Ba-Re} for a discussion on this issue.

This problem has been considered in \cite{Ba-Uh}, where a uniqueness result with two measurements is proven.
The authors assume $D\in C^{k+2}$, $\sigma\in C^{k+1}$ with $k\geq1$.
They also present  a Lipschitz stability theorem.
This result has been later improved in \cite{Ba-Re} also providing a numerical reconstruction
procedure.
More recently, \cite{RGZ} have considered the same problem when the prescribed illumination is modeled by a  Robin boundary condition, rather than a Dirichlet one. A reconstruction method
for the full nonlinear inversion is also treated.

All the above quoted results rely on a nondegeneracy condition which can be illustrated as follows. If $u_1$, $u_2$ are two solutions to \eqref{m1} corresponding to two different illuminations $g_1$, $g_2>0$, then it is a quite well-known fact that the quotient
\[u=\frac{u_2}{u_1}\]
satisfies an elliptic equation in pure divergence form
\begin{equation*}
\left\{\begin{array}{ll}
\dive( a\nabla u)=0 & \textrm{in }\Omega,\\[2mm]
u=g & \textrm{on }\partial \Omega,
\end{array}\right.\end{equation*}
where $g$ is the ratio $\frac{g_2}{g_1}$ (see Proposition \ref{proph-1} below). It is also easy to see that the solution of the qPAT problem boils down to solve the inverse problem of finding $a$ given $u$ and the boundary values $a_{|_{\der\om}}$. This is a relatively easy task if $g$ is chosen in such a way that the following nondegeneracy condition holds:
\begin{equation}\label{X}
|\nabla u|>0,\textrm{ everywhere in  }\om.
\end{equation}
When $n=2$ there exists a well-established criterion which enables to choose the Dirichlet data $g$ (independently of $a$!) so that \eqref{X} holds (\cite{A}, \cite{AM}).
Such a criterion is unimodality. That is, roughly speaking that the graph  of $g$ has one single peak of maximum points, one of minimum and is monotone in between.

In dimension $n\geq 3$ complex valued solutions satisfying \eqref{X} can be constructed by the method of Complex Geometrical Optics \cite{Ba-Uh}, but their boundary data do depend on the interior values of the (unknown) coefficient $a$ and thus they cannot be a-priori chosen.

Real valued solutions which locally satisfy $|\nabla u|>0$ can also be constructed (see \cite{Greene-Wu}, \cite[Theorem 4.7]{Bal-Uhl}), but still they depend on the unknown coefficient $a$.

Indeed, there are reasons to believe that, when $n\geq 3$, it does not exists any Dirichlet data $g$ such that \eqref{X} is satisfied for every $a$. See, for related discussions, \cite{AN}, \cite{Ca}.

The principal aim of the present paper is to treat stability even when the above stated nondegeneracy condition may be violated.

We shall show that stability can be obtained with essentially arbitrary illuminations $g_1$, $g_2$ imposing only one constraint on their ratio $g=\frac{g_2}{g_1}$. Namely that $g$ satisfies a condition of unimodality adapted to the $(n-1)$-dimensional boundary $\der\om$. This condition shall be made more precise in Definition \ref{qu}.

Our strategy shall be as follows. In dimension $n\geq 3$, for a fixed $g$, we cannot assure the nonvanishing of $|\nabla u|$ throughout $\om$, but, under reasonable assumptions, it is possible to keep under control the vanishing rate in the interior. This will be the content of Lemma \ref{quc}.

On the other hand, assuming unimodality we can make sure that $|\nabla u|>0$ on a small neighborhood of $\der\om$ (Lemma \ref{Hopf}).

Next we adapt from \cite{A} a weighted stability estimate on the coefficients $a$ in terms of $u$ and of $a_{|_{\der\om}}$.

Using the previously mentioned estimates on the vanishing rate of $|\nabla u|$ and a suitable interpolation inequality \cite{S}, we arrive at an (unweighted) stability estimate of H\"older type for $a$ (Theorem \ref{teo2}). The deduction of stability bounds for $D$ and $\sigma$ follows the track of well-known arguments, see for instance \cite{RGZ}.

Let us emphasize that most of the present effort is devoted to two main goals:
\begin{enumerate}
\item To avoid the nondegeneracy condition \eqref{X}.
\item To make precise (but feasible) a-priori assumptions which guarantee a quantitative, concrete, evaluation in our stability estimates.
\end{enumerate}
It is our belief that the present approach can be useful also to other, more complex, hybrid inverse problems, where analogous issues of nondegeneracy arise.

The paper is organized as follows. In the next Section \ref{Sec2} we provide the main assumptions
and we state our main result (Theorem \ref{theoh-2}). The proof of it is based on some auxiliary propositions,
given in the subsequent Section \ref{Sec3}, and  is presented in Section \ref{Sec4}.

\section{Assumptions and Main Result}
\label{Sec2}
We assume $\om$ to be a $C^2$ - smooth, bounded domain in $\RR^n$ diffeomorphic to the unit ball $B_1(0)$.
More precisely, from a quantitative point of view, we assume that there exists a diffeomorphism
$F$ of class $C^2$ such that, for given constants $Q_0$, $Q_1>0$,
\begin{subequations}\label{diff}
\begin{equation}
F:B_1(0)\leftrightarrow\om,\end{equation}
\begin{equation}
\|F\|_{C^2(B_1(0))}\leq Q_0,\end{equation}
\begin{equation} \left|F(x)-F(y)\right|\geq \frac{1}{Q_1} |x-y| ,\textrm{ for every } x,y\in B_1(0).\end{equation}
\end{subequations}
The constants $Q_0$, $Q_1$, shall be part of the a-priori information that shall be used in our quantitative stability estimates.

We are interested in recovering the unknown parameters $D$ and $\sigma$, by  performing two measurements, that is prescribing two data $g_1$ and $g_2$ on $\der\om$ and measuring the corresponding internal pressure fields. In particular we  establish a continuous dependence of the unknown parameters  from the measured data.

Given constants $\lambda_0$, $\lambda_1$, $E_0$, $E_1$, $\mu_0$, $\mu_1$, $\mu_2>0$ (which also shall be part of the a-priori information), we consider unknown coefficients $D$ and $\sigma$ such that
\begin{equation}
	\label{h1-1}
	D\in W^{1,\infty}(\om),\quad \sigma\in W^{1,\infty}(\om),
\end{equation}
and
\begin{equation}
	\label{h2-1}
	\lambda_0^{-1}\leq D\leq \lambda_0, \textrm{ for every }x\in \om,
	\end{equation}

\begin{equation}
	\label{h4-1}
	\lambda_1^{-1}\leq \sigma\leq \lambda_1, \textrm{ for every }x\in \om,
	\end{equation}
	
	\begin{equation}
\label{h3-1}
\|D\|_{W^{1,\infty}(\om)}\leq E_0,\quad		\|\sigma\|_{W^{1,\infty}(\om)}\leq E_1.
	\end{equation}
The boundary data we choose are functions $g_1$ and $g_2$ such that
\begin{equation}
	\label{h4.5-1}
g_i\in C^2(\der\om), \quad\quad \|g_i\|_{C^2(\der\om)}\leq \mu_0\,\mbox{ for }i=1,2,\end{equation}
\begin{equation}
	\label{h5-1}
	\mu_1^{-1}\leq g_i(x)\leq \mu_1,\textrm{ for every } x\in\om,\quad i=1,2.
\end{equation}
Moreover, denoting by
$$g=\frac{g_2}{g_1}\quad\textrm{and}\quad\ov g=\frac{1}{|\partial \om|}\int_{\partial\om}g,$$
we assume that
\begin{equation}
\label{h5.5-1}
\|\,g-\ov{g}\,\|_{L^2(\der\om)}\geq \mu_2^{-1}.
\end{equation}
\begin{rem}
Let us emphasize that \eqref{h5.5-1} represents constructively the assumption that the illuminations $g_1$, $g_2$ are linearly independent.
\end{rem}
\begin{rem}
Note that, if $g_1$, $g_2$ satisfy assumptions \eqref{h5-1} and \eqref{h5.5-1}, then the so-called frequency function associated to $g$
\begin{equation}
	\label{freq}
	F[g]:=\frac{\|g-\ov{g}\|_{H^{1/2}(\der\om)}}{\|g-\ov{g}\|_{L^{2}(\der\om)}}
\end{equation}
is bounded by a constant depending only on $\mu_0$, $\mu_2$ and $Q_1$.
\end{rem}

We shall see that it is convenient to assume that the ratio $g=\frac{g_2}{g_1}$ of the illuminations has a specific behaviour which is expressed in the following definition. This is a form of monotonicity assumption, which needs however to be specified in quantitative fashion.
\begin{definition}\label{qu}
Given $m$, $M$, $0<m<M$ and a continuous, strictly increasing function $\omega:\RR^+\to\RR^+$, such that  $\omega(0)=0$,
we say that a function $g\in C^1(\der\om,\RR)$ is \emph{quantitatively unimodal} if
\begin{equation}\label{maxmin}
m\leq g(x)\leq M,\mbox{ for every }x\in \der\om,
\end{equation}
the subsets of $\der\om$
\begin{equation}\label{ptimaxmin}
\Gamma_m=\{x\in\der\om\,:\,g(x)=m\}\mbox{ and }\Gamma_M=\{x\in\der\om\,:\,g(x)=M\}
\end{equation}
are connected and non-empty, possibly reduced to  single points, and,
for every $x\in\der\om\setminus(\Gamma_m\cup\Gamma_M)$ such that $dist(x,\Gamma_m\cup\Gamma_M)\geq \delta$, we have
\begin{equation}\label{quantunimod}
|\nabla_T g(x)|\geq \omega(\delta),
\end{equation}
where $\nabla_T$ denotes the tangential gradient.
\end{definition}
Also $m$, $M$, and $\omega$ shall be part of the a-priori information.

In our stability result, we compare two different sets of diffusion and absorption coefficients, that we denote by $D^{(1)},\,\sigma^{(1)} $ and $D^{(2)},\, \sigma^{(2)}$.

Let $u_i^{(j)}$, for $i,j=1,2$, be solution to
\begin{equation}\label{h6-1}
	\left\{
  \begin{array}{ll}
    -\dive\left(D^{(j)}\nabla u_i^{(j)}\right)+\sigma^{(j)}u_i^{(j)} =0& \mbox{in }\om,\\
    u_i^{(j)}=g_i& \mbox{on }\der\om.
  \end{array}
\right.
\end{equation}
We emphasize that, in the notation $u_i^{(j)}$, the superscript is associated to the unknown parameters $D^{(j)}$, $\sigma^{(j)}$, whereas the subscript is associated to the illumination $g_i$.

The available measurements which represent the internal pressure fields generated by the absorptions of photons energy are given by
\begin{equation}
	\label{h8-1}
	H_i^{(j)}=\sigma^{(j)}u_i^{(j)}.
\end{equation}


We prove the following (uniqueness and) stability result:
\begin{theo}\label{theoh-2}
Let all the assumptions stated above be satisfied.
If
\begin{equation}
	\label{h1-2}
	\left\|H_i^{(1)}-H_i^{(2)}\right\|_{L^2(\om)}\leq \ep,\mbox{ for }i=1,2,
\end{equation}
and
\begin{equation}\label{Dbordo}
\left\|D^{(1)}-D^{(2)}\right\|_{L^\infty(\der\om)}\leq \ep^\prime,
\end{equation}
then we have
\begin{equation}
	\label{h2-2}
	\left\|D^{(1)}-D^{(2)}\right\|_{L^2(\om)}+\left\|\sigma^{(1)}-\sigma^{(2)}\right\|_{L^2(\om)}
\leq C\left(\ep+ \ep^\prime\right)^{\theta},
\end{equation}
where $C$ and $\theta\in(0,1)$ only depend on the a-priori information $Q_0$, $Q_1$, $\lambda_0$, $\lambda_1$, $E_0$, $E_1$, $\mu_0$, $\mu_1$, $\mu_2$, $m$, $M$ and $\omega$.
\end{theo}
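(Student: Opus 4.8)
The plan is to reduce the problem to the divergence-form inverse problem for the quotient of solutions, invoke the Hölder stability estimate for that problem, and finally recover $D$ and $\sigma$ by a standard elliptic argument.

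First I would pass to the quotients. For $j=1,2$ set $v^{(j)}=u_2^{(j)}/u_1^{(j)}$. By Proposition \ref{proph-1}, $v^{(j)}$ solves $\dive(a^{(j)}\nabla v^{(j)})=0$ in $\om$ with $a^{(j)}=D^{(j)}\big(u_1^{(j)}\big)^2$ and boundary value $v^{(j)}=g=g_2/g_1$ on $\der\om$, the same for both $j$; moreover $g$ is quantitatively unimodal in the sense of Definition \ref{qu}. Since $H_i^{(j)}=\sigma^{(j)}u_i^{(j)}$, the crucial observation is that the quotient is directly measurable,
\[
v^{(j)}=\frac{H_2^{(j)}}{H_1^{(j)}}.
\]
By the maximum principle, Harnack's inequality, and elliptic regularity for \eqref{h6-1}, all the $u_i^{(j)}$ are bounded above and below by positive constants with $L^\infty$ gradient bounds, all depending only on the a-priori data; in particular $H_1^{(j)}\geq c>0$. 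Writing
\[
v^{(1)}-v^{(2)}=\frac{(H_2^{(1)}-H_2^{(2)})H_1^{(2)}+H_2^{(2)}(H_1^{(2)}-H_1^{(1)})}{H_1^{(1)}H_1^{(2)}},
\]
the hypothesis \eqref{h1-2} yields $\|v^{(1)}-v^{(2)}\|_{L^2(\om)}\leq C\ep$, with $C$ depending only on the a-priori information.

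Next I would feed this into the stability estimate for the divergence-form problem. The boundary values of the coefficients are $a^{(j)}|_{\der\om}=D^{(j)}|_{\der\om}\,g_1^2$, so \eqref{Dbordo} and \eqref{h5-1} give $\|a^{(1)}-a^{(2)}\|_{L^\infty(\der\om)}\leq\mu_1^2\ep^\prime$. Applying Theorem \ref{teo2} to the pair $v^{(1)},v^{(2)}$ then produces
\[
\|a^{(1)}-a^{(2)}\|_{L^2(\om)}\leq C\left(\ep+\ep^\prime\right)^{\theta},
\]
with $C,\theta$ depending only on the a-priori data. This is the heart of the argument and the main obstacle: Theorem \ref{teo2} must convert the \emph{weighted} estimate adapted from \cite{A}, in which $a$ is controlled only where $|\nabla v|$ does not vanish, into an unweighted Hölder bound. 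The delicate point is exactly the degeneracy of $\nabla v$ in dimension $n\geq 3$: one controls its interior vanishing rate through Lemma \ref{quc} (a doubling/quantitative unique continuation estimate, whose quantitative input is ultimately the bounded frequency function of $g$ after \eqref{freq}), and its non-vanishing in a boundary layer through Lemma \ref{Hopf} (via quantitative unimodality and a Hopf-type argument), and then interpolates with \cite{S}. In assembling Theorem \ref{theoh-2} I would take this estimate as given.

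Finally I would recover $D$ and $\sigma$. Set $\phi^{(j)}=1/u_1^{(j)}$. A direct computation using \eqref{h6-1} shows $a^{(j)}\nabla\phi^{(j)}=-D^{(j)}\nabla u_1^{(j)}$, hence
\[
-\dive\!\left(a^{(j)}\nabla\phi^{(j)}\right)=H_1^{(j)}\quad\text{in }\om,\qquad \phi^{(j)}=1/g_1\ \text{ on }\der\om.
\]
The difference $\psi=\phi^{(1)}-\phi^{(2)}$ vanishes on $\der\om$ and satisfies $-\dive(a^{(1)}\nabla\psi)=(H_1^{(1)}-H_1^{(2)})+\dive\big((a^{(1)}-a^{(2)})\nabla\phi^{(2)}\big)$; testing with $\psi$ and using ellipticity, the Poincaré inequality, the $L^\infty$ bound on $\nabla\phi^{(2)}$, \eqref{h1-2} and the previous step gives $\|\psi\|_{H^1(\om)}\leq C(\ep+\ep^\prime)^{\theta}$. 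Since $\sigma^{(j)}=H_1^{(j)}\phi^{(j)}$ and $D^{(j)}=a^{(j)}\big(\phi^{(j)}\big)^2$, expanding the differences as sums of products of uniformly bounded factors with one small factor (either $\psi$, $H_1^{(1)}-H_1^{(2)}$, or $a^{(1)}-a^{(2)}$) yields $\|D^{(1)}-D^{(2)}\|_{L^2(\om)}+\|\sigma^{(1)}-\sigma^{(2)}\|_{L^2(\om)}\leq C(\ep+\ep^\prime)^{\theta}$, which is \eqref{h2-2}. Throughout I would verify that every constant depends only on the listed a-priori quantities and that $\theta\in(0,1)$ is inherited from Theorem \ref{teo2}. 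This last reconstruction step follows the well-known track of \cite{RGZ}; the genuinely new content is the degeneracy-tolerant stability of Theorem \ref{teo2}.
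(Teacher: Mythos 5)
Your proposal is correct and follows essentially the same route as the paper: the same reduction to the measurable quotients $H_2^{(j)}/H_1^{(j)}$ via Proposition \ref{proph-1}, the same lower bounds on $H_1^{(j)}$ to get the $L^2$ estimate of Claim \ref{45}, the same application of Theorem \ref{teo2} to $a^{(j)}=D^{(j)}(u_1^{(j)})^2$, and the same recovery of $\sigma$ and $D$ through the equation satisfied by $1/u_1^{(j)}$ (the paper invokes Corollary 8.7 of \cite{GT} where you test with $\psi$ directly, which is the same energy estimate). The only cosmetic difference is that you state the output of Theorem \ref{teo2} in $L^2(\om)$ rather than the $L^\infty(\om)$ norm it actually provides, which is harmless since the weaker bound suffices for the remaining steps.
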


%

\section{Auxiliary results}
\label{Sec3}
The proof of Theorem \ref{theoh-2} is based on a result concerning stable reconstruction of the main coefficient of a second order elliptic equation from the knowledge of internal values of one of its non constant solutions.

\begin{theo}\label{teo1} Let $\om$ be diffeomorphic to the unit ball.
Let $a$ and $b\in W^{1,\infty}(\om)$ such that
\begin{equation}
	\label{3-1}
	C_0^{-1}\leq a(x),\,b(x)\leq C_0,\textrm{ for every }x\in \om,
\end{equation}
and
\begin{equation}
	\label{3.5-1}
	\left|\nabla a(x)\right|, \left|\nabla b(x)\right|\leq C_1 ,\textrm{ for almost every }x\in \om.
\end{equation}
Let $g$ and $k\in C^2(\der\om)$, with
\begin{equation}
\label{4-1}
\|g\|_{C^2(\der\om)},\|k\|_{C^2(\der\om)}\leq C_2.\end{equation}
Assume $g$ satisfies assumption \eqref{h5.5-1} and
\begin{equation}
	\label{5-1}
	g(x)\geq C_3^{-1}\mbox{ for every } x\in\om.
\end{equation}

Let $u$ and $v$ in $W^{1,2}(\om)$ be the unique solutions to boundary value problems
\begin{equation}\label{Pu}
	\left\{
  \begin{array}{rl}
    \dive\left(a(x)\nabla u(x)\right) =0& \mbox{in }\om,\\
    u=g& \mbox{on }\der\om,
  \end{array}
\right.
\end{equation}
and
\begin{equation}\label{Pv}
	\left\{
  \begin{array}{rl}
    \dive\left(b(x)\nabla v(x)\right) =0& \mbox{in }\om,\\
    v=k& \mbox{on }\der\om.
  \end{array}
\right.
\end{equation}
Given $\om^\prime\subset\subset\om$ with $\mbox{dist}\left(\der\om,\om^\prime\right)\geq d_0$ and $\theta\in(0,1/2)$ there are positive constants
$\tilde{C}$ and $\beta$, depending only on $C_0$, $C_1$, $C_2$, $\mu_2$, $C_3$, $Q_0$, $Q_1$, $d_0$ and $\theta$,  such that
\begin{equation}
	\label{tesiteo1}
\|a-b\|_{L^\infty(\om^\prime)}\leq \tilde{C}\left(\|u-v\|_{L^2(\om)}^\theta+\|a-b\|_{L^\infty(\der\om)}\right)^\beta.
\end{equation}
\end{theo}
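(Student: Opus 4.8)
The plan is to reduce the problem to a first–order (transport) equation for the coefficients and then to invert it quantitatively, the only delicate point being the possible vanishing of $\nabla u$. Set $c=a-b$ and $w=u-v$. Subtracting the weak formulations of \eqref{Pu} and \eqref{Pv} and rewriting $a\nabla u-b\nabla v=c\,\nabla u+b\,\nabla w$, one obtains the divergence identity
\[
\dive\bigl(c\,\nabla u\bigr)=-\dive\bigl(b\,\nabla w\bigr)\qquad\textrm{in }\om ,
\]
equivalently, setting $\psi=\log(a/b)$ (comparable to $a-b$ by \eqref{3-1}), the transport equation $\nabla u\cdot\nabla\psi=-b^{-1}\dive(b\,\nabla w)$. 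Since $g\geq C_3^{-1}$, the maximum principle gives $u\geq C_3^{-1}>0$ in $\om$, while \eqref{3-1}, \eqref{3.5-1}, \eqref{4-1} together with the $C^2$–regularity of $\om$ yield, by standard elliptic regularity, a priori bounds for $u$ and $v$ in $W^{2,p}(\om)\hookrightarrow C^{1,\alpha}(\ov\om)$ depending only on the data. The characteristics of $\nabla u\cdot\nabla(\cdot)$ are the gradient flow lines of $u$, and $\psi$ is essentially determined by its boundary values (controlled by $\|a-b\|_{L^\infty(\der\om)}$) together with the right–hand side; the obstruction is precisely that this operator degenerates on the critical set $\{\nabla u=0\}$.

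First I would dispose of the right–hand side. By the a priori $W^{2,p}$ bound, $w=u-v$ is bounded in $W^{2,p}(\om)$, so the interpolation inequality of \cite{S} gives
\[
\|\nabla w\|_{L^\infty(\om)}\leq C\,\|w\|_{L^2(\om)}^{\theta}
\]
for the prescribed $\theta\in(0,1/2)$, with $C$ depending only on the a priori information. This is the origin of the factor $\|u-v\|_{L^2(\om)}^{\theta}$ in \eqref{tesiteo1}. It is important to keep the identity in divergence form and to integrate by parts against the test function, so that only $\nabla w$ (never second derivatives of $w$) appears; the latter could not be made small by interpolation, whereas $\|\nabla w\|_{L^\infty}$ can.

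Next, adapting the weighted stability estimate of \cite{A}, I would test the divergence identity against a suitable function built from $c$ and integrate by parts, using \eqref{3.5-1} to absorb the lower–order terms carrying $\nabla a,\nabla b$ and keeping the boundary contributions under control by $\|a-b\|_{L^\infty(\der\om)}$; the non–degeneracy \eqref{h5.5-1} together with $u\geq C_3^{-1}$ ensures a definite lower bound for the energy of $u$, i.e. that $u$ is quantitatively non–constant. The delicate point here, which is exactly what one borrows from \cite{A}, is to arrange the manipulation so that the only surviving terms are either of size $\|\nabla w\|_{L^\infty}$ or bounded by $\|a-b\|_{L^\infty(\der\om)}$, rather than by uncontrolled a priori constants. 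The expected outcome is a weighted estimate of the form
\[
\int_\om |\nabla u|^2\,(a-b)^2\,dx\leq C\Bigl(\|u-v\|_{L^2(\om)}^{2\theta}+\|a-b\|_{L^\infty(\der\om)}^2\Bigr),
\]
in which the weight $|\nabla u|^2$ vanishes only on the critical set of $u$.

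The main obstacle, and the step replacing the usual nondegeneracy assumption \eqref{X}, is passing from this weighted estimate to a genuine bound on $\om^\prime$. For this I would invoke the quantitative control of the vanishing rate of $|\nabla u|$ in the interior (Lemma \ref{quc}), which, through the frequency–function bound attached to \eqref{h5.5-1}, provides an estimate for the measure of the near–critical set $\{x\in\om^\prime:|\nabla u(x)|<t\}$. Splitting $\int_{\om^\prime}(a-b)^2$ over $\{|\nabla u|\geq t\}$ and $\{|\nabla u|<t\}$, bounding the first piece by $t^{-2}$ times the weighted estimate and the second by $\|a-b\|_{L^\infty}^2$ times the measure of the near–critical set, and optimising in $t$, yields a power bound for $\|a-b\|_{L^2(\om^\prime)}$ in terms of the data. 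Finally, since $|\nabla(a-b)|\leq 2C_1$ by \eqref{3.5-1}, a standard interpolation between $L^2$ and the Lipschitz seminorm upgrades this to the $L^\infty(\om^\prime)$ estimate \eqref{tesiteo1}, the exponent $\beta$ collecting the losses from each step. I expect the quantitative smallness of the near–critical set to be the crux: it is here that the interior vanishing–rate information, in place of the a-priori-unverifiable \eqref{X}, does the essential work, and it is the reason why no unimodality or boundary (Hopf) input is needed for this purely interior statement.
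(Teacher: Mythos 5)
Your overall architecture (a weighted estimate in the spirit of \cite{A}, then removal of the degenerate weight $|\nabla u|^2$ using quantitative information on its vanishing, then an upgrade to $L^\infty$ via the Lipschitz bound \eqref{3.5-1}) matches the paper's, and the first part --- the weighted inequality, which the paper takes directly from Lemma 2.1 of \cite{A} in the form \eqref{1-3}, with $|a-b|$ to the first power rather than squared --- is consistent with what you describe. The genuine gap is in the deweighting step. You propose to bound the measure of the near-critical set $\{x\in\om^\prime:|\nabla u(x)|<t\}$ and you attribute this control to Lemma \ref{quc} ``through the frequency-function bound attached to \eqref{h5.5-1}''. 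But Lemma \ref{quc} gives only the lower bound \eqref{1-8} for the energy $\int_{B_r(x_0)}|\nabla u|^2$ on balls centred in $\om^\prime$; it says nothing about the distribution function of $|\nabla u|$. A bound of the type $\left|\{|\nabla u|<t\}\right|\le \omega(t)$ with $\omega(t)\to 0$ amounts to an $A_\infty$/Muckenhoupt-type property of the weight $|\nabla u|^2$, which is available for $n=2$ (via the theory of quasiregular mappings) but is precisely the kind of quantitative control on the critical set of $u$ that is \emph{not} known in dimension $n\ge 3$ for $W^{1,\infty}$ coefficients --- indeed avoiding any such hypothesis is the declared point of the paper. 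As written, your splitting over $\{|\nabla u|\ge t\}$ and $\{|\nabla u|<t\}$ cannot be closed.

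The paper closes the argument by a different, more elementary device, which it attributes to Sincich \cite[Proposition 4.9]{S}: let $x_0$ be a maximum point of $|\phi|=|a-b|$ over $\overline{\om^\prime}$; by \eqref{3.5-1} one has $|\phi(x_0)|\le|\phi(x)|+2C_1r$ for $x\in B_r(x_0)$; multiplying by $|\nabla u(x)|^2$, integrating over $B_r(x_0)$ and dividing by $\int_{B_r(x_0)}|\nabla u|^2$ --- which is bounded below by $r^{K_1}/K_2$ thanks to \eqref{1-8} --- gives
\[
\max_{\overline{\om^\prime}}|a-b|\le K_0K_2\,r^{-K_1}\left(\|u-v\|_{L^2(\om)}^\theta+\|a-b\|_{L^\infty(\der\om)}\right)+2C_1r,
\]
and optimising in $r$ yields \eqref{tesiteo1}. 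This uses only the ball-energy lower bound you already have, together with the Lipschitz continuity of $a-b$; no information on sublevel sets of $|\nabla u|$ is required. If you replace your near-critical-set step by this localisation at the maximum point, your proof becomes essentially the paper's; your final $L^2$-to-$L^\infty$ interpolation is then also unnecessary, since the argument produces the sup norm directly.
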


In order to prove Theorem \ref{teo1}, we need the following lemmas.
For $d>0$ we will denote by $\om_d=\{x\in\om\,:\, dist(x,\der\om)>d\}$.
\begin{lem}\label{lemma2.2}
With the same assumptions of Theorem \ref{teo1}, for every $\rho>0$ and for every $x\in\om_{4\rho}$
\[\int_{B_\rho(x)}|\nabla u|^2\geq C_\rho \int_{\om}|\nabla u|^2,\]
where $C_\rho$ depends on $C_0$, $C_1$, $Q_0$, $Q_1$, $F[g]$  and $\rho$ only.
\end{lem}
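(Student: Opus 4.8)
The plan is to combine a three-spheres inequality for the local Dirichlet integral with a chain-of-balls propagation, thereby reducing the statement to a single lower bound on the energy carried by one reference ball well inside $\om$. First note that the asserted inequality is invariant under $u\mapsto\lambda u$ (equivalently $g\mapsto\lambda g$), so it is harmless to normalize and to work with the quotient
\[
\phi(z):=\frac{1}{E}\int_{B_\rho(z)}|\nabla u|^2,\qquad E:=\int_\om|\nabla u|^2 .
\]
By assumption \eqref{h5.5-1} the trace $g=u_{|_{\der\om}}$ is non-constant, hence $E>0$ and $\phi(z)\in(0,1]$ is well defined. The goal becomes the uniform lower bound $\phi(x)\geq C_\rho$ for every $x\in\om_{4\rho}$.

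The first ingredient is a three-spheres inequality for solutions of $\dive(a\nabla u)=0$: there exist $\tau\in(0,1)$ and $C>0$, depending only on the ellipticity constant $C_0$, the Lipschitz bound $C_1$ and the geometry $Q_0,Q_1$ (but \emph{not} on $u$), such that for $x_0\in\om_{3\rho}$
\[
\int_{B_{2\rho}(x_0)}|\nabla u|^2\leq C\Big(\int_{B_{\rho}(x_0)}|\nabla u|^2\Big)^{\tau}\Big(\int_{B_{3\rho}(x_0)}|\nabla u|^2\Big)^{1-\tau}.
\]
This is the logarithmic convexity of the Dirichlet integral and follows from a Carleman estimate (or Almgren-type frequency monotonicity) adapted to Lipschitz coefficients. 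Since $B_\rho(y)\subset B_{2\rho}(x_0)$ whenever $|x_0-y|\leq\rho$, and $\int_{B_{3\rho}(x_0)}|\nabla u|^2\leq E$, dividing by $E$ yields the propagation estimate $\phi(y)\leq C\,\phi(x_0)^{\tau}$, i.e. $\phi(x_0)\geq C^{-1/\tau}\phi(y)^{1/\tau}$, for neighbouring centres. As $\om$ is diffeomorphic to a ball, the interior region $\om_{3\rho}$ is connected, and any two points of $\om_{4\rho}$ can be joined by a chain $y=z_0,z_1,\dots,z_L=x$ of consecutive centres in $\om_{3\rho}$ at mutual distance $\leq\rho$, the number of steps $L$ being controlled by $\rho$ and the diameter of $\om$ (hence by $Q_0,Q_1$). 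Since $1/\tau>1$ and $\phi\leq1$, iterating the propagation estimate along the chain turns a positive lower bound at one interior centre $y$ into a positive lower bound $\phi(x)\geq\epsilon_0^{\,\tau^{-L}}\,C^{-(\tau^{-L}-1)/(\tau-1)}$ at every $x\in\om_{4\rho}$.

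It remains to exhibit a reference centre $y$ with $\phi(y)\geq\epsilon_0>0$. Covering $\om_{4\rho}$ by finitely many balls of radius $\rho$ with bounded overlap, the number $N_\rho$ being controlled by $\rho,Q_0,Q_1$, and selecting the ball of largest energy, one finds $y\in\om_{4\rho}$ with $\int_{B_\rho(y)}|\nabla u|^2\geq N_\rho^{-1}\int_{\om_{4\rho}}|\nabla u|^2$. Everything is thus reduced to the quantitative statement that a definite fraction of the total energy survives away from the boundary, namely
\[
\int_{\om_{4\rho}}|\nabla u|^2\geq c_\ast\,E ,
\]
with $c_\ast$ depending on the a-priori data. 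This is precisely where the boundary frequency $F[g]$ enters, and it is the main obstacle: a solution whose boundary datum oscillates rapidly concentrates its gradient in a thin collar near $\der\om$, so the surviving interior fraction $c_\ast$ must degrade as $F[g]$ grows. I would establish this collar estimate either through the monotonicity of the Almgren frequency based at boundary points, which bounds the boundary-to-interior decay rate in terms of $F[g]$, or through a boundary Carleman/three-spheres estimate coupled with the energy comparison $\int_\om a|\nabla u|^2\asymp\|g-\ov g\|_{H^{1/2}(\der\om)}^2$ and the definition \eqref{freq} of $F[g]$. Inserting this collar bound into the covering step and the chain propagation produces the constant $C_\rho=C_\rho(C_0,C_1,Q_0,Q_1,F[g],\rho)$ and closes the proof.
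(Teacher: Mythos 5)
Your overall architecture --- a three-spheres inequality for the Dirichlet integral, chain-of-balls propagation through the connected interior region, a covering argument to select a reference ball carrying a definite share of the interior energy, and a final reduction to the claim that a fixed fraction of the total energy survives in $\om_{4\rho}$ --- is exactly the architecture of the result this lemma is modelled on, namely Theorem 4.2 of \cite{AMR2003} (``Lipschitz propagation of smallness''). The paper does not reprove that theorem; its proof of the lemma consists precisely in explaining how to adapt the one estimate of \cite{AMR2003} that involves the boundary data from the Neumann to the Dirichlet setting. So in spirit you are on the same track as the source the paper relies on.

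The genuine gap is your last step. The inequality $\int_{\om_{4\rho}}|\nabla u|^2\geq c_\ast\int_\om|\nabla u|^2$ is not a technical afterthought: it is the only place where the frequency $F[g]$ can enter, and it is essentially the whole analytic content of the cited theorem. You correctly flag it as ``the main obstacle'', but then you only list two candidate strategies (boundary Almgren monotonicity, or a boundary Carleman/three-spheres estimate combined with the energy comparison) without carrying either out. Each of these is a substantial argument in its own right; in particular, the global comparison $\int_\om a|\nabla u|^2\asymp\|g-\ov g\|^2_{H^{1/2}(\der\om)}$ by itself says nothing about how much of that energy can concentrate in a thin collar, so invoking it together with the definition \eqref{freq} does not yet produce $c_\ast$. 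The paper closes exactly this step by importing estimate (4.28) of \cite{AMR2003}, which controls the constants of the entire propagation scheme through the global ratio $\|\nabla u_0\|_{L^2(\om)}/\|u_0\|_{L^2(\om)}$ for $u_0=u-\ov u$, and then bounding that ratio by $CF[g]^2$ via two elementary observations: the elliptic estimate $\|\nabla u_0\|_{L^2(\om)}\leq C\|g-\ov g\|_{H^{1/2}(\der\om)}$ and the trace lower bound $\|u_0\|_{L^2(\der\om)}=\|g-\ov u\|_{L^2(\der\om)}\geq\|g-\ov g\|_{L^2(\der\om)}$, the latter because the boundary mean $\ov g$ minimizes the $L^2(\der\om)$ distance of $g$ to constants. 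To make your proof complete you would either have to prove the collar estimate from scratch or, as the paper does, cite the relevant machinery and verify that its hypotheses reduce to \eqref{h5.5-1} and the a-priori bounds. A minor further point: the connectivity of $\om_{3\rho}$ and the uniform bound on the chain length $L$ require the quantitative diffeomorphism \eqref{diff}, not merely the topological fact that $\om$ is a ball, and the case of $\rho$ comparable to the inradius needs a separate (trivial or reductive) treatment.
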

\begin{proof}
This Lemma corresponds to Theorem 4.2 in \cite{AMR2003} for solutions of Dirichlet boundary problem instead of Neumann boundary problem. The proof follows the same path by taking $u_0=u-\ov{u}$ where $\ov{u}=|\om|^{-1}\int_\om u$. The only difference sits in getting from estimate $(4.28)$ in  \cite{AMR2003} to
an estimate in terms of the frequency function $F[g]$. In this case, by standard elliptic estimates,
\[\|\nabla u_0\|_{L^2(\om)}=\|\nabla (u-\ov{g})\|_{L^2(\om)}\leq C \|g-\ov{g}\|_{H^{1/2}(\der\om)},\]
where $C$ depends on $Q_0$, $Q_1$ and $C_0$, and
\[\|u_0\|_{L^2(\der\om)}=\|g-\ov{u}\|_{L^2(\der\om)}\geq\|g-\ov{g}\|_{L^2(\der\om)},\]
hence, from estimate $(4.28)$ in \cite{AMR2003}, we get
\[\frac{\|\nabla u_0\|_{L^2(\om)}}{\|u_0\|_{L^2(\om)}}\leq C\frac{\|\nabla u_0\|^2_{L^2(\om)}}{\| u_0\|^2_{L^2(\der\om)}}\leq C\left(\frac{\|g-\ov{g}\|_{H^{1/2}(\der\om)}}{\|g-\ov{g}\|_{L^2(\der\om)}}\right)^2=CF[g]^2.\]
The rest of the proof is as in \cite{AMR2003}.
\end{proof}

\begin{lem}\label{quc}
With the same assumptions of Theorem \ref{teo1}, given $\om^\prime\subset\subset\om$ with $\mbox{dist}\left(\der\om,\om^\prime\right)\geq d_0$, there exist positive constants $K_1$ and
$K_2>1$ and $d_1\leq d_0$, depending only on $C_0$, $C_1$, $C_2$, $\mu_2$, $C_3$ and $d_0$, such that, for every $x_0\in\om^\prime$ and $r\leq d_1$,
\begin{equation}
	\label{1-8}
	\int_{B_r(x_0)}|\nabla u|^2\geq \frac{r^{K_1}}{K_2}.
\end{equation}
\end{lem}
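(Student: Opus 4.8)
The plan is to derive the polynomial lower bound \eqref{1-8} by combining a nondegeneracy of the Dirichlet energy at a fixed reference scale with a doubling inequality for $|\nabla u|^2$, and then iterating the latter across dyadic scales. The first step is to fix a reference radius $\rho_0=d_0/4$ and prove $\int_{B_{\rho_0}(x_0)}|\nabla u|^2\geq\kappa$ for every $x_0\in\om'$, with $\kappa>0$ depending only on the a-priori information. This rests on a global lower bound for the energy: writing $\ov u=|\om|^{-1}\int_\om u$ and combining the Poincar\'e inequality with the trace inequality on $\om$, one obtains $\|g-\ov g\|_{L^2(\der\om)}\leq\|g-\ov u\|_{L^2(\der\om)}=\|u-\ov u\|_{L^2(\der\om)}\leq C\|\nabla u\|_{L^2(\om)}$, where the first inequality uses that $\ov g$ is the best constant $L^2(\der\om)$-approximation of $g$. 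Assumption \eqref{h5.5-1} then forces $\int_\om|\nabla u|^2\geq c_0>0$. Since $\om'\subset\om_{d_0}=\om_{4\rho_0}$, Lemma \ref{lemma2.2} applied at radius $\rho_0$ yields $\int_{B_{\rho_0}(x_0)}|\nabla u|^2\geq C_{\rho_0}\int_\om|\nabla u|^2\geq C_{\rho_0}c_0=:\kappa$.

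The core of the argument is a doubling inequality for the gradient: there is a constant $N\geq 1$, depending only on the a-priori data, and a scale $d_1\leq d_0$ such that $\int_{B_{2r}(x_0)}|\nabla u|^2\leq N\int_{B_r(x_0)}|\nabla u|^2$ for all $r\leq d_1$ and $x_0\in\om'$. This is the standard consequence of the Almgren--Garofalo--Lin frequency-function machinery for second order elliptic equations with Lipschitz principal coefficient, in the spirit of \cite{AMR2003}. I expect this to be the main obstacle. The delicate point is not the existence of a doubling constant but its \emph{uniform} control: $N$ must depend only on the admissible constants and not on $u$, which requires bounding the frequency at the reference scale $\rho_0$. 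That bound is precisely what the previous step supplies, since the frequency is controlled from above by the ratio of the global energy $\int_\om|\nabla u|^2$ (bounded above by standard elliptic estimates in terms of $\|g\|_{C^2(\der\om)}$) to the reference-scale energy $\kappa$, both governed by the a-priori constants. The borderline $W^{1,\infty}$ regularity of $a$ is exactly the threshold at which such quantitative unique-continuation estimates remain valid, so care is needed in tracking the constants through that machinery.

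Finally I would iterate. Setting $r_j=\rho_0 2^{-j}$, the doubling inequality gives $\int_{B_{r_j}(x_0)}|\nabla u|^2\geq N^{-j}\int_{B_{\rho_0}(x_0)}|\nabla u|^2\geq N^{-j}\kappa$. For an arbitrary $r\leq\rho_0$ one picks $j$ with $r_{j+1}<r\leq r_j$, so that $2^{j}\leq\rho_0/r$ and $\int_{B_r(x_0)}|\nabla u|^2\geq\int_{B_{r_{j+1}}(x_0)}|\nabla u|^2\geq N^{-(j+1)}\kappa\geq\kappa N^{-1}\rho_0^{-\log_2 N}\,r^{\log_2 N}$. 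This is exactly \eqref{1-8} with $K_1=\log_2 N$ and $K_2=N\rho_0^{\log_2 N}/\kappa$, enlarged if necessary so that $K_2>1$, and with $d_1$ taken as the scale up to which the doubling inequality holds. All constants produced in this way depend only on the a-priori information, as required.
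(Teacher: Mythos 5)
Your argument is correct and follows essentially the same route as the paper: a global energy lower bound from \eqref{h5.5-1} via trace and Poincar\'e inequalities, Lemma \ref{lemma2.2} to transfer it to a fixed interior scale and thereby bound the Garofalo--Lin frequency uniformly, and then the doubling inequality iterated down to radius $r$ (the paper uses the equivalent integrated power-law form \eqref{6-9} rather than a dyadic iteration). The only bookkeeping slip is that you anchor the iteration at $\rho_0=d_0/4$ while doubling is only guaranteed for $r\leq d_1$, which may be smaller; the fix is to apply Lemma \ref{lemma2.2} at radius $d_1$ itself (shrinking $d_1$ so that $4d_1\leq d_0$), which is exactly what the paper does in \eqref{4-11}.
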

\begin{proof}
As in the proof of Theorem 4.3 in \cite{AMR2003}, starting from the doubling inequality by Garofalo and Lin (\cite{GL}) and using Caccioppoli and Poincar\`{e} inequalities, we can show that there is a constant $2d_1\leq d_0$, depending only on $C_0$ and $C_1$, such that
we get, for $r\leq\rho\leq d_1$
\begin{equation}
	\label{6-9}
	\int_{B_\rho(x_0)}|\nabla u|^2\leq C \left(\frac{2\rho}{r}\right)^{K-2} \int_{B_r(x_0)}|\nabla u|^2,
\end{equation}
where $C$ and $K$ depend on $C_0$ and $C_1$, and $K$ depends also, increasingly, on
\begin{equation}
	\label{3-9}
	\tilde{N}(d_1)=\frac{d_1^2\int_{B_{2d_1}(x_0)}|\nabla u|^2}{\int_{B_{2d_1}(x_0)}(u-\ov{u}_r)^2},
\end{equation}
where $\ov{u}_r=\frac{1}{|B_r(x_0)|}\int_{B_r(x_0)}u$.
Now, in order to estimate $\tilde{N}(d_1)$ from above in terms of a-priori information, we can use Caccioppoli inequality
to get
\begin{equation}
	\label{1-10}
	\tilde{N}(d_1)\leq \frac{C_C}{4}\frac{\int_{B_{2d_1}(x_0)}|\nabla u|^2}{\int_{B_{d_1}(x_0)}|\nabla u|^2}.
\end{equation}

By \eqref{1-10} and Lemma \ref{lemma2.2} we have
\begin{equation}
	\label{3-11}
	\tilde{N}(d_1)
	\leq\frac{C_C}{4}\frac{\int_{\om}|\nabla u|^2}{C_{d_1}\int_{\om}|\nabla u|^2}=\frac{C_C}{4C_{d_1}}.
\end{equation}

By \eqref{6-9} with $\rho=d_1$, \eqref{3-11} and Lemma \ref{lemma2.2} again, we finally get
\begin{equation}
	\label{4-11}
	\int_{B_r(x_0)}|\nabla u|^2\geq  C^{-1} \left(\frac{r}{d_1}\right)^{K-2} \int_{B_{d_1}(x_0)}|\nabla u|^2\geq
	\frac{r^{K_1}}{C}
	\int_\om|\nabla u|^2.
\end{equation}
Now, by assumption \eqref{h5.5-1}, trace estimates and Poincar\`{e} inequality we get
\begin{equation}\label{comelolevo}
\mu_2^{-2}\leq\|g-\ov{g}\|_{H^{1/2}}^2\leq C\|u-\ov{g}\|^2_{H^1(\om)}\leq C\|\nabla u\|^2_{L^2(\om)},
\end{equation}
hence, by combining \eqref{4-11} and \eqref{comelolevo}, we finally get \eqref{1-8}.\end{proof}

\textit{Proof of Theorem \ref{teo1}.}
The first step in the proof relies on  Lemma 2.1 contained in \cite{A}. Although such Lemma 2.1 is stated in a two-dimensional setting, its validity can be extended in a straightforward manner to any dimension.

By Lemma 2.1 of \cite{A}, for any $\theta\in(0,1/2)$ there is a constant
$K_0>0$, depending only on $C_0$, $C_1$, $C_2$, $Q_0$, $Q_1$ and $\theta$, such that
\begin{equation}
	\label{1-3}
\int_\om|a-b|\left|\nabla u\right|^2\leq K_0\left(\|u-v\|_{L^2(\om)}^\theta+\|a-b\|_{L^\infty(\der\om)}\right).
\end{equation}

We now reproduce an argument due to Sincich \cite[Proposition 4.9]{S}. Let us set $\phi=a-b$ and let $x_0\in \om^\prime$ such that
\begin{equation}
\label{1-12}	
|\phi(x_0)|=\max_{\overline{\om^\prime}}|\phi(x)|.
\end{equation}

Since $a$ and $b$ satisfy  assumption \eqref{3.5-1},
\begin{equation}\label{2-12}
		|\phi(x_0)|\leq 	|\phi(x)| +2C_1r,\textrm{ for every }x\in B_r(x_0),\textrm{ with }0<r\leq d_0.
\end{equation}

Multiplying \eqref{2-12} by $|\nabla u(x)|^2$ and integrating with respect to $x$ on $B_r(x_0)$, we get
\begin{equation}\label{3-12}
	|\phi(x_0)|\int_{B_r(x_0)}\!\!\!\!\!\!|\nabla u(x)|^2dx\leq \int_{B_r(x_0)}	\!\!\!\!\!\!|\phi(x)||\nabla u(x)|^2dx  +C_1r\int_{B_r(x_0)}\!\!\!\!\!\!|\nabla u(x)|^2dx,
\end{equation}
hence
\begin{equation}\label{4-12}
	|\phi(x_0)|\leq \frac{\int_{B_r(x_0)}	|\phi(x)||\nabla u(x)|^2dx}{\int_{B_r(x_0)}|\nabla u(x)|^2dx}+2C_1r.
\end{equation}
By \eqref{1-3} and \eqref{1-8},  and  by \eqref{1-12}
we have
\begin{equation}\label{5-12}
\max_{\overline{\om^\prime}}|a(x)-b(x)|\leq K_0 K_2 r^{-K_1}\left(\|u-v\|_{L^2(\om)}^\theta+\|a-b\|_{L^\infty(\der\om)}\right)+2C_1r.
\end{equation}
By choosing an appropriate $r\in (0,d_0)$ we get \eqref{tesiteo1}.\qed

Let us now show that, by choosing a boundary condition with some additional features, we can bound form below the norm of $\nabla u$ in a neighborhood of the boundary. The following Lemma is a variation on themes treated in \cite[Lemma 2.8, Theorem 4.1]{AN}.

\begin{lem}\label{Hopf}
Let $\om$ be diffeomorphic to the unit ball and let $g$ be quantitatively unimodal, according to Definition \ref{qu}.
 If $u$ is the unique solution of problem \eqref{Pu} for a coefficient $a$ satisfying assumption \eqref{3-1}, then
\begin{equation}\label{tshopf}
|\nabla u|\geq C,\textrm{ for every }x\in \om,\textrm{ with }dist(x,\der\om)\leq\rho,
\end{equation} where $\rho>0$ and $C>0$ depend only on $C_0$, $Q_0$, $Q_1$, $m$, $M$ and $\omega$.
\end{lem}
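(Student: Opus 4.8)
The plan is to establish a quantitative lower bound on $|\nabla u|$ near $\partial\om$ by combining the unimodality of the boundary datum $g$ with a Hopf-type boundary estimate. I would split the boundary into two regions: the \emph{transition region}, where $x$ is at distance at least $\delta$ from the extremal sets $\Gamma_m\cup\Gamma_M$, and the \emph{extremal regions}, consisting of small neighborhoods of $\Gamma_m$ and of $\Gamma_M$. On the transition region the tangential gradient of the solution is under control directly from the data, whereas near $\Gamma_m$ and $\Gamma_M$ the tangential gradient may vanish and one must instead exploit the normal derivative via the Hopf lemma.

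First, on the transition region, quantitative unimodality \eqref{quantunimod} gives $|\nabla_T g(x)|\geq\omega(\delta)>0$ on $\der\om$. Since $u=g$ on $\der\om$, the tangential gradient of $u$ along the boundary equals $\nabla_T g$, so $|\nabla u(x)|\geq|\nabla_T u(x)|=|\nabla_T g(x)|\geq\omega(\delta)$ for $x\in\der\om\setminus(\Gamma_m\cup\Gamma_M)$ with $dist(x,\Gamma_m\cup\Gamma_M)\geq\delta$. By $C^1$ dependence of $\nabla u$ up to the boundary — using the regularity of $a$, of $g$, and of $\partial\om$ quantified through $C_0$, $Q_0$, $Q_1$, $\mu_0$ — this lower bound persists, after possibly halving it, on a one-sided neighborhood of thickness $\rho_1=\rho_1(\delta)$ inside $\om$.

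Second, near the extremal sets I would invoke the Hopf lemma in quantitative form. The point is that on $\Gamma_M$ the function $u$ attains its boundary maximum $M$, and by the weak maximum principle $u<M$ in the interior (the equation being $\dive(a\nabla u)=0$ with $a$ bounded above and below by $C_0$); a quantitative Hopf estimate then forces $\partial u/\partial\nu\leq-c<0$ on $\Gamma_M$, with $c$ depending on $C_0$, on the geometry $Q_0,Q_1$, and on the gap $M-m$ that bounds the oscillation of $u$ from below. Symmetrically $\partial u/\partial\nu\geq c>0$ on $\Gamma_m$. Hence $|\nabla u|\geq c$ on $\Gamma_m\cup\Gamma_M$, and again by $C^1$ continuity up to the boundary this survives on a one-sided neighborhood of thickness $\rho_2$. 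Fixing $\delta$ first so that the neighborhoods of $\Gamma_m\cup\Gamma_M$ of radius $\delta$ are covered by the Hopf estimate, then choosing $\rho=\min\{\rho_1(\delta),\rho_2\}$ and $C=\min\{\tfrac12\omega(\delta),\tfrac12 c\}$, the two estimates patch together to yield \eqref{tshopf} on all of $\{x\in\om:dist(x,\der\om)\leq\rho\}$.

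The main obstacle is making the Hopf estimate genuinely \emph{quantitative} and \emph{uniform} in the a-priori class, since the classical Hopf lemma is qualitative. One needs a lower bound on $|\partial u/\partial\nu|$ at the extremal points in terms of how much $u$ has decreased from its boundary maximum at a controlled interior distance, which in turn requires a lower bound on the interior oscillation $\max u-\min u$. This is where the constraint $M-m>0$ and, more subtly, the non-degeneracy furnished by \eqref{h5.5-1} (controlling $\|g-\ov g\|_{L^2}$ from below, hence forcing $u$ genuinely non-constant) enter; one constructs a barrier in a small ball or half-ball resting on $\partial\om$, using the uniform ellipticity $C_0$ and the diffeomorphism bounds $Q_0,Q_1$ to transfer the barrier from the model ball to $\om$. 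Matching the barrier to the known boundary oscillation of $g$ along $\partial\om$ near $\Gamma_M$ (where $g$ drops by at least $\omega(\delta)\cdot\delta$-order amounts away from the level set) is the delicate quantitative bookkeeping. The reference to \cite[Lemma 2.8, Theorem 4.1]{AN} indicates that precisely such a quantitative Hopf mechanism is available and is the tool I would adapt.
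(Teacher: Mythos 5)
Your proposal is correct and follows essentially the same route as the paper: unimodality controls the tangential gradient on the part of $\der\om$ away from $\Gamma_m\cup\Gamma_M$, a quantitative Hopf lemma controls the normal derivative on the extremal sets, and $C^{1,\beta}$ regularity up to the boundary patches the two bounds and propagates them into a collar of $\der\om$. The only step where the paper is more concrete is the quantitative interior drop feeding the Hopf barrier: it is obtained by applying the Harnack inequality to $u-m$ (resp.\ $M-u$), which is bounded below by $(M-m)/2$ near $\Gamma_M$ (resp.\ $\Gamma_m$), so the gap $M-m$ from the a-priori information suffices and neither \eqref{h5.5-1} nor the boundary oscillation of $g$ near the level sets is needed there.
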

\begin{proof}
The diffeomorphism $F$ in assumption \eqref{diff} transforms the elliptic equation in \eqref{Pu} into a similar elliptic equation in $B_1(0)$ with $W^{1,\infty}$ main coefficient. The constant of ellipticity and all the constant appearing in the assumptions \eqref{maxmin}, \eqref{ptimaxmin} and \eqref{quantunimod} shall be changed in a controlled manner only depending on the a-priori information. For this reason we  assume, without loss of generality, that $\om=B_1(0)$.

By regularity estimates for solutions of elliptic equations, the
$C^{1,\beta}(\overline{B_1(0))}$ norm of $u$ is bounded in terms of the a-priori information, hence, for $x\in B_1(0)$ and $dist(x,\Gamma_M)<\eta$ we have
\[u(x)-m\geq M-m-C\eta.\]
By choosing $\eta$ small enough, we get
\[u(x)-m\geq \frac{M-m}{2}.\]

\noindent
By Harnack inequality (\cite[Theorem 8.20, Corollary 8.21]{GT}),
\begin{equation}
	\label{hopf1} u(x)-m\geq C_\eta\frac{M-m}{2},\textrm{ for every }x\in\overline{B_{1-\eta}(0)}.
\end{equation}
In particular, if we choose $y\in\Gamma_m$ and $x=(1-\eta)y$, we get
\[u(x)-u(y)\geq C_\eta\frac{M-m}{2}.\]
By Hopf lemma (
\cite[Lemma 3.4]{GT}) 
 we have
\[|\nabla u(y)|\geq k>0,\textrm{ for every }y\in\Gamma_m.\]

Since we can proceed in the same way on $\Gamma_M$ and by using again the regularity $C^{1,\beta}$ of $u$ up to the boundary, we have
 \begin{equation}\label{hopf2}|\nabla u(x) |\geq k-C\delta^\beta
,\textrm{ for every }x\in\der\om,\textrm{ with }dist(x,\Gamma_m\cup\Gamma_M)\leq \delta.\end{equation}
By choosing $\delta=\overline{\delta}$ so that $C\overline{\delta}^\beta=k/2$, by \eqref{hopf2} and \eqref{quantunimod}, we have
\[|\nabla u|\geq \max\{\omega(\overline{\delta}),k/2\},\textrm{ on }\der\om.\]
By using again the $C^{1,\beta}$ regularity of $u$ up to the boundary, we get \eqref{tshopf}.
\end{proof}

\begin{theo}\label{teo2}
Let $\om$, $a$, $b$, $g$ and $k$ be as in Theorem \ref{teo1}. Let us also assume that
$g$  is quantitatively unimodal.
Let $u$ and $v$ in $W^{1,2}(\om)$ be the unique solutions to boundary value problems \eqref{Pu} and \eqref{Pv}.
Given $\theta\in(0,1/2)$ there are positive constants
$\tilde{C}$ and $\beta$, depending only on $C_0$, $C_1$, $C_2$, $\mu_2$, $C_3$, $Q_0$, $Q_1$, $d_0$, $\theta$, $m$, $M$ and $\omega$  such that
\begin{equation}
	\label{tesiteo2}
\|a-b\|_{L^\infty(\om)}\leq \tilde{C}\left(\|u-v\|_{L^2(\om)}^\theta+\|a-b\|_{L^\infty(\der\om)}\right)^\beta.
\end{equation}
\end{theo}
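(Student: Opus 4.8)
The plan is to upgrade the interior estimate of Theorem \ref{teo1} to a global one by combining it with the boundary nondegeneracy provided by Lemma \ref{Hopf}. The key observation is that Theorem \ref{teo1} already controls $\|a-b\|_{L^\infty(\om')}$ on any compactly contained subdomain $\om'$, so what remains is to handle the boundary layer $\om\setminus\om'$. The hypothesis of quantitative unimodality on $g$ is precisely what lets us invoke Lemma \ref{Hopf} to conclude that $|\nabla u|\geq C>0$ on a neighborhood $\{x\in\om:\,\mathrm{dist}(x,\der\om)\leq\rho\}$, where $\rho$ and $C$ depend only on the a-priori information. Thus on this boundary strip we recover a nondegeneracy condition of the form \eqref{X}, which is exactly what was missing in the interior.

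First I would fix $d_0=\rho/2$ (or any comparable choice) and set $\om'=\om_{d_0}$, so that $\om\setminus\om'$ is contained in the strip where Lemma \ref{Hopf} guarantees $|\nabla u|\geq C$. Applying Theorem \ref{teo1} with this $\om'$ yields
\begin{equation}
	\label{interiorpart}
\|a-b\|_{L^\infty(\om')}\leq \tilde{C}_1\left(\|u-v\|_{L^2(\om)}^\theta+\|a-b\|_{L^\infty(\der\om)}\right)^{\beta_1}.
\end{equation}
For the boundary strip, I would run the Sincich-type argument from the proof of Theorem \ref{teo1} again, but now using the pointwise lower bound on $|\nabla u|$ from Lemma \ref{Hopf} in place of the doubling-based estimate \eqref{1-8}. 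Concretely, letting $\phi=a-b$ and choosing $x_1\in\overline{\om\setminus\om'}$ realizing the maximum of $|\phi|$ over the strip, the oscillation bound \eqref{2-12} combined with the lower bound $\int_{B_r(x_1)}|\nabla u|^2\geq C^2|B_r(x_1)|\geq c\,r^n$ (valid for small $r$, since $|\nabla u|\geq C$ near the boundary) gives an inequality analogous to \eqref{4-12}, and then \eqref{1-3} furnishes the numerator bound. This produces
\begin{equation}
	\label{boundarypart}
\|a-b\|_{L^\infty(\om\setminus\om')}\leq \tilde{C}_2\left(\|u-v\|_{L^2(\om)}^\theta+\|a-b\|_{L^\infty(\der\om)}\right)^{\beta_2}.
\end{equation}
Taking the maximum of the two regions and adjusting $\tilde C$ and $\beta=\min\{\beta_1,\beta_2\}$ yields \eqref{tesiteo2}.

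The main obstacle, and the point requiring the most care, is the boundary-strip estimate \eqref{boundarypart}: the test-ball $B_r(x_1)$ centered at a point $x_1$ near $\der\om$ may not be contained in $\om$, so one must either work with the portion $B_r(x_1)\cap\om$ and verify that the lower bound $\int_{B_r(x_1)\cap\om}|\nabla u|^2\geq c\,r^n$ still holds (which follows from $|\nabla u|\geq C$ throughout the strip together with a uniform lower bound on $|B_r(x_1)\cap\om|$, guaranteed by the $C^2$-regularity of $\der\om$ encoded in \eqref{diff}), or slightly shrink the strip so that the relevant balls remain interior. A secondary technical point is that the integral inequality \eqref{1-3} is stated over all of $\om$, so it bounds the numerator $\int_{B_r(x_1)\cap\om}|\phi||\nabla u|^2$ directly, and no separate argument is needed there. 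Once the geometry of the partial balls is handled, the rest is a routine optimization in $r$ exactly as in the last line of the proof of Theorem \ref{teo1}.
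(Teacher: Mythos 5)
Your proposal is correct and follows essentially the same route as the paper: the paper's (very terse) proof likewise extends the lower bound \eqref{1-8} to every point of $\om$ via Lemma \ref{Hopf} and the regularity of $\der\om$, and then reruns the Sincich-type argument of Theorem \ref{teo1}. Your two-region decomposition and your discussion of the partial balls $B_r(x_1)\cap\om$ merely make explicit what the paper leaves implicit.
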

\begin{proof}
The proof follows the same steps of the proof of Theorem \ref{teo1}. It suffices to extend estimate \eqref{1-8} to every point $x_0$ in $\om$. This extension is possible by Lemma \ref{Hopf} and by the regularity assumptions on $\der\om$.
 \end{proof}
\section{Proof of Theorem \ref{theoh-2}}
\label{Sec4}
We proceed as in \cite{RGZ} (and in \cite{Ba-Uh}) and show that, for a a fixed set of coefficients, the ratio of two solutions (corresponding to different boundary values) satisfies a partial differential equation.

\begin{prop}\label{proph-1}
For $j=1,2$, the function
\begin{equation}
	\label{h9-1}
	U^{(j)}=\frac{H_2^{(j)}}{H_1^{(j)}}
\end{equation}
satisfies the equation
\begin{equation}
	\label{h10-1}
	  -\dive\left(a^{(j)}\nabla U^{(j)}\right)=0\mbox{ in }\om,
\end{equation}
where
\begin{equation}\label{h11-1}
	a^{(j)}=\frac{D^{(j)}}{\left(\sigma^{(j)}\right)^2}\left(H_1^{(j)}\right)^2=
D^{(j)}\left(u_1^{(j)}\right)^2.
\end{equation}
Moreover,
\[
	U^{(j)}=\frac{g_2}{g_1}\mbox{ on }\der\om.
\]
\end{prop}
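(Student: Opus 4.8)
The plan is to fix the index $j$ throughout and suppress it from the notation, writing $u_i$, $D$, $\sigma$, $H_i$ in place of $u_i^{(j)}$, $D^{(j)}$, $\sigma^{(j)}$, $H_i^{(j)}$. The first observation is that the absorption coefficient cancels in the quotient: since $H_i=\sigma u_i$ with the same $\sigma$ in numerator and denominator,
\begin{equation*}
U=\frac{H_2}{H_1}=\frac{\sigma u_2}{\sigma u_1}=\frac{u_2}{u_1}.
\end{equation*}
For this to be meaningful I would first record that $u_1>0$ throughout $\om$: by \eqref{h5-1} the datum $g_1$ is strictly positive, and since $u_1$ solves \eqref{h6-1} with $\sigma\geq\lambda_1^{-1}>0$, the weak maximum principle gives $u_1\geq 0$, while the Harnack inequality rules out interior zeros and yields a positive lower bound depending only on the a-priori data. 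Hence $U$ is well defined and the boundary identity $U=u_2/u_1=g_2/g_1$ on $\der\om$ is immediate.

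The core is an elementary differential identity. Writing $u_2=U\,u_1$, so that $\nabla u_2=U\nabla u_1+u_1\nabla U$, I substitute into $\dive(D\nabla u_2)=\sigma u_2$ and expand by the product rule:
\begin{equation*}
\dive(D\nabla u_2)=D\nabla U\cdot\nabla u_1+U\,\dive(D\nabla u_1)+\dive\!\left(u_1D\nabla U\right).
\end{equation*}
Using $\dive(D\nabla u_1)=\sigma u_1$ in the middle term and $\sigma u_2=\sigma U u_1$ on the right-hand side, the two terms carrying $\sigma U u_1$ cancel, leaving
\begin{equation*}
D\nabla U\cdot\nabla u_1+\dive\!\left(u_1D\nabla U\right)=0.
\end{equation*}
Multiplying by $u_1$ and recognizing, via the identity $\dive(u_1\cdot u_1D\nabla U)=u_1D\nabla u_1\cdot\nabla U+u_1\dive(u_1D\nabla U)$, that the result is $\dive\!\left(Du_1^2\nabla U\right)$, I conclude $\dive\!\left(Du_1^2\nabla U\right)=0$. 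Setting $a=Du_1^2$ and using $u_1=H_1/\sigma$ to rewrite $a=D\frac{H_1^2}{\sigma^2}$ gives exactly \eqref{h11-1} and \eqref{h10-1}.

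The one point requiring care is the regularity justifying these manipulations. Since $D,\sigma\in W^{1,\infty}$ the solutions $u_i$ are a priori only in $W^{1,2}(\om)$, so the pointwise computation above should be read weakly: I would interpret the conclusion $\dive(a\nabla U)=0$ in distributional form as $\int_\om a\nabla U\cdot\nabla\varphi=0$ for all $\varphi\in C_c^\infty(\om)$, and recover the cancellation by testing the weak formulations for $u_2$ and $u_1$ against $\varphi$ and $U\varphi$ respectively (the positive lower bound on $u_1$ ensures $U\in W^{1,2}_{loc}(\om)$ and that the test function $U\varphi$ is admissible). The same lower bound guarantees that $a=Du_1^2$ is genuinely uniformly elliptic, and elliptic regularity—the $C^{1,\beta}$ bound already invoked in Lemma \ref{Hopf}—shows $u_1$, hence $a$, enjoys the qualitative regularity needed to feed $a$ into Theorem \ref{teo1}. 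I expect this weak/strong bookkeeping, rather than the formal identity itself, to be the only genuine obstacle.
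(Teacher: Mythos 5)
Your proposal is correct and follows essentially the same route as the paper, which simply asserts that "it is an easy calculation" to check $-\dive\bigl(Du_1^2\nabla(u_2/u_1)\bigr)=0$ and then cancels $\sigma$ in the quotient $H_2/H_1=u_2/u_1$; you have merely carried out that calculation explicitly and added the (correct) remarks on the positivity of $u_1$ and the weak formulation, the former of which the paper itself establishes later in the proof of Claim \ref{45}. No gaps.
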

\textit{Proof of Proposition \ref{proph-1}.}
For the sake of simplicity we drop the superscript $(j)$.
It is an easy calculation to check that, since $u_i$ solves equation \eqref{h6-1}, then
	\[-\dive\left(Du_1^2\nabla \left(\frac{u_2}{u_1}\right)\right)=0,
\]
hence, by \eqref{h8-1} and \eqref{h11-1}
	\[-\dive\left(a\nabla \left(\frac{H_2}{H_1}\right)\right)=-\dive\left(Du_1^2\nabla \left(\frac{u_2}{u_1}\right)\right)=0.
\]
\qed

Our aim is to apply Theorem \ref{teo2} to the functions
$U^{(1)}$ and $U^{(2)}$ introduced in the previous proposition.
First of all, let us show that:
\begin{claim}\label{45}If \eqref{h1-2} holds, then
\begin{equation}
	\label{h1-3}
	\left\|U^{(1)}-U^{(2)}\right\|_{L^2(\om)}\leq C\ep,
\end{equation}
where $C$ depends only on $\lambda_0$, $\lambda_1$, $\mu_0$, $E_0$ and $E_1$.
\end{claim}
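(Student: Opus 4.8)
The plan is to reduce the $L^2$ bound on $U^{(1)}-U^{(2)}$ to the $L^2$ bounds on the differences $H_i^{(1)}-H_i^{(2)}$ by treating each $U^{(j)}=H_2^{(j)}/H_1^{(j)}$ as a quotient and controlling it through two-sided pointwise bounds on the data $H_i^{(j)}$. First I would record uniform bounds on the $H_i^{(j)}$. Writing $H_i^{(j)}=\sigma^{(j)}u_i^{(j)}$ as in \eqref{h8-1}, the weak maximum principle applied to \eqref{h6-1} (whose zeroth order coefficient $\sigma^{(j)}$ is nonnegative) gives $0<u_i^{(j)}\le\max_{\der\om}g_i$, so by \eqref{h4-1} and \eqref{h4.5-1} we obtain the upper bound $H_i^{(j)}\le\lambda_1\mu_0$. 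For the lower bound I would use the positivity of the boundary data in \eqref{h5-1}, namely $g_i\ge\mu_1^{-1}$, together with the Harnack inequality for the uniformly elliptic operator in \eqref{h6-1}: since $u_i^{(j)}$ is a positive solution, its interior infimum is controlled by its supremum, and a boundary barrier (using the $C^2$ regularity of $\der\om$ encoded in $Q_0$, $Q_1$) propagates the boundary lower bound inward, yielding $u_i^{(j)}\ge c_1>0$ with $c_1$ depending only on the a priori data. Hence $H_1^{(j)}\ge\lambda_1^{-1}c_1=:c>0$ on $\ov\om$.

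With these bounds in hand, I would decompose the difference of quotients in the standard way,
\[
U^{(1)}-U^{(2)}=\frac{H_2^{(1)}-H_2^{(2)}}{H_1^{(1)}}+H_2^{(2)}\,\frac{H_1^{(2)}-H_1^{(1)}}{H_1^{(1)}H_1^{(2)}},
\]
so that pointwise
\[
\left|U^{(1)}-U^{(2)}\right|\le\frac{1}{c}\left|H_2^{(1)}-H_2^{(2)}\right|+\frac{\lambda_1\mu_0}{c^2}\left|H_1^{(1)}-H_1^{(2)}\right|,
\]
where I have used $H_2^{(2)}\le\lambda_1\mu_0$ in the numerator and $H_1^{(1)}H_1^{(2)}\ge c^2$ in the denominator of the second term. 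Taking $L^2(\om)$ norms and invoking the hypothesis \eqref{h1-2} for both $i=1,2$ then yields \eqref{h1-3} with $C=c^{-1}+\lambda_1\mu_0\,c^{-2}$, a constant depending only on the a priori information.

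The main obstacle is the quantitative positive lower bound on the denominators $H_1^{(j)}$, equivalently on the solutions $u_1^{(j)}$: unlike the upper bound, which is an immediate consequence of the maximum principle, the lower bound is genuinely an interior estimate, since the zeroth order term $\sigma^{(j)}u_i^{(j)}$ tends to depress $u$ away from the boundary and no positive constant is a subsolution (every positive constant is in fact a supersolution of the operator). This is precisely where the Harnack inequality, combined with the boundary positivity \eqref{h5-1}, must enter, and it is the one nontrivial analytic input; the remainder is elementary algebra of quotients and the triangle inequality in $L^2$.
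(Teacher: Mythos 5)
Your proposal is correct and follows essentially the same route as the paper: the same quotient decomposition of $U^{(1)}-U^{(2)}$, the upper bound on $H_i^{(j)}$ from the maximum principle, and the crucial lower bound on $H_1^{(j)}$ obtained by propagating the boundary positivity $g_1\geq\mu_1^{-1}$ into a collar and then applying the Harnack inequality in the interior. The only (immaterial) difference is that the paper propagates the boundary bound inward via a uniform gradient estimate (Theorem 8.33 of \cite{GT}) rather than a barrier, and it records the explicit constant $2\mu_1\lambda_1^3\mu_3^2$.
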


\textit{Proof of Claim \ref{45}.}
Let us write
\begin{equation}
	\label{h2-3}
	 U^{(1)}-U^{(2)}=\frac{\left(H_2^{(1)}-H_2^{(2)}\right)H_1^{(2)}+\left(H_1^{(2)}-H_1^{(1)}\right)H_2^{(2)}}{H_1^{(1)}H_1^{(2)}}.
\end{equation}
We need to get a lower estimate for the functions $H_1^{(1)}$ and $H_1^{(2)}$. By \eqref{h8-1} and \eqref{h4-1},
it is enough to show that $u_1^{(j)}$ is bounded from below in terms of a-priori information.

By \eqref{h4-1} and \eqref{h5-1}, we can apply Maximum Principle and get
\begin{equation}
	\label{h1-4}
	0\leq u_1^{(j)}(x)\leq \mu_1,\textrm{ for every }x\in\om.
\end{equation}

By Theorem 8.33 in \cite{GT}, $\nabla u_1$ is bounded in terms of a-priori information.
Since $u_1^{(j)}=g_1\geq \mu_1^{-1}$ on $\der\om$ (by \eqref{h5-1}), there exists a positive constant $d$, such that
\begin{equation}
	\label{h3-4}
	u_1^{(j)}\geq \frac{\mu_1^{-1}}{2},\textrm{ in }\om\setminus\om_d,
\end{equation}
where, we recall, $\om_d=\{x\in\om\,:\, dist(x,\der\om)>d\}$.
By Harnach inequality (\cite[Theorem 8.20, Corollary 8.21]{GT}),
\begin{equation}
	\label{h5-4}
	C_H\inf_{\om_{d/2}}u_1^{(j)}\geq \sup_{\om_{d/2}} u_1^{(j)}\geq \frac{\mu_1^{-1}}{2},
\end{equation}
hence, by \eqref{h3-1} and \eqref{h5-1},
\begin{equation}
	\label{h6-4}
	\inf_\om u_1^{(j)}=\min\left\{\inf_{\om_{d/2}}u_1^{(j)}, \inf_{\om\setminus\om_d}u_1^{(j)}\right\}\geq
\min\left\{\frac{\mu_1^{-1}}{2C_H},\frac{\mu_1^{-1}}{2}\right\}:=\mu_3^{-1}.
\end{equation}
Inequality \eqref{h6-4} holds for $u_2^{(j)}$ as well.
By \eqref{h6-4} and \eqref{h4-1},
\begin{equation}
	\label{h7-4}
	H_i^{(j)}=\sigma^{(j)}u_i^{(j)}\geq (\lambda_1\mu_3)^{-1}.
\end{equation}
Moreover, by \eqref{h4-1} and \eqref{h1-4}
\begin{equation}
	\label{h8-4}
	H_i^{(j)}\leq \lambda_1 \mu_1.
\end{equation}
By \eqref{h2-3}, \eqref{h7-4}, \eqref{h7-4} and \eqref{h1-2} we finally have
\begin{equation}
	\label{h1-5}
	\|U^{(1)}-U^{(2)}\|_{L^2(\om)}\leq 2 \mu_1\lambda_1^3\mu_3^2\ep.
\end{equation}
\qed

\textit{Proof of Theorem \ref{theoh-2} (Conclusion).}

We notice that, by \eqref{h11-1}, \eqref{h2-1}, \eqref{h5-1}, \eqref{h3-1} and by Theorem 8.33 in \cite{GT}, coefficients $a^{(1)}$ and $a^{(2)}$ satisfy assumptions \eqref{3-1} and \eqref{3.5-1}.

Moreover,  by \eqref{h11-1}, \eqref{h5-1} and \eqref{Dbordo},
\begin{equation}
	\label{h3-5}
	\left\|a^{(1)}-a^{(2)}\right\|_{L^\infty(\der\om)}\leq \mu_1^2 \ep^\prime.
\end{equation}
Hence, by Theorem \ref{teo2},
\begin{equation}
	\label{h4-5}
	\|a^{(1)}-a^{(2)}\|_{L^\infty(\om)}\leq \tilde{C}\left(\ep+\ep^\prime\right)^{\theta},
\end{equation}
where $\tilde{C}$ and $\theta$ depend only on the a-priori information.

%

We now proceed as in \cite{RGZ}. By straightforward calculation it is easy to show that the function $u_1^{(j)}$ solves
\begin{equation}\label{h1-7}
	\left\{
  \begin{array}{rl}
    -\dive\left(a^{(j)}\nabla \left(\frac{1}{u_1^{(j)}}\right)\right)=H_1^{(j)}& \mbox{ in }\om,\\[2mm] 
    \displaystyle{\frac{1}{u_1^{(j)}}=\frac{1}{g_1}}& \mbox{ on }\der\om,
  \end{array}
\right.
\end{equation}
from which we can get
\begin{equation}
	\label{h1.5-7}
	 -\dive\left(a^{(1)}\nabla \left(\frac{1}{u_1^{(1)}}-\frac{1}{u_1^{(2)}}\right)\right)=H_1^{(1)}-H_1^{(2)}+
	\dive\left(\left(a^{(1)}-a^{(2)}\right)\nabla \left(\frac{1}{u_1^{(2)}}\right)\right).
\end{equation}
By Theorem 8.34 in \cite{GT},
\begin{equation}
	\label{h2-7}
	\left\|\nabla\left(\frac{1}{u_1^{(2)}}\right)\right\|_{L^\infty(\om)}=
	\left\|\frac{\nabla u_1^{(2)}}{\left(u_1^{(2)}\right)^2}\right\|_{L^\infty(\om)}
	\leq \mu_3^2\left\|\nabla u_1^{(2)}\right\|_{L^\infty(\om)}\leq C,
\end{equation}
hence, by \eqref{h4-5}, by assumption \eqref{h1-2} and by Corollary 8.7 in \cite{GT}, since $\frac{1}{u_1^{(1)}}-\frac{1}{u_1^{(2)}}=0$ on $\der\om$, we conclude that
\begin{equation}
	\label{h1-8}
	\left\|\frac{1}{u_1^{(1)}}-\frac{1}{u_1^{(2)}}\right\|_{W^{1,2}(\om)}
\leq C\left(\ep+ \ep^\prime\right)^{\theta}.
\end{equation}

Finally, by definition \eqref{h8-1},
\begin{equation}
	\label{h1-9}
	\sigma^{(1)}-\sigma^{(2)}=\frac{H_1^{(1)}}{u_1^{(1)}}-\frac{H_1^{(2)}}{u_1^{(2)}}
	=\frac{H_1^{(1)}-H_1^{(2)}}{u_1^{(1)}}+H_1^{(1)}\left(\frac{1}{u_1^{(1)}}-\frac{1}{u_1^{(2)}}\right),
\end{equation}
hence, by \eqref{h1-2} and \eqref{h1-8},
\begin{equation}
	\label{h2-9}
	\|\sigma^{(1)}-\sigma^{(2)}\|_{L^2(\om)}\leq
\mu_3\ep+\lambda_1\mu_1C\left(\ep+ \ep^\prime\right)^{\theta}.
\end{equation}
Since
\begin{equation}
	\label{h3-9}
	D^{(1)}-D^{(2)}=\frac{a^{(1)}}{\left(u_1^{(1)}\right)^2}-\frac{a^{(2)}}{\left(u_1^{(2)}\right)^2},
\end{equation}
by \eqref{h4-5}, \eqref{h6-4}  and \eqref{h1-8}
\begin{equation}
	\label{h4-9}
		\|D^{(1)}-D^{(2)}\|_{L^2(\om)}\leq C\left(\ep+ \ep^\prime\right)^{\theta},
\end{equation}
where $C$ and $\theta$ depend only on the a-priori information.

\section*{Acknowledgements}
GA was supported by FRA2014 \textit{Problemi inversi per PDE, unicit\`a, stabilit\`a, algoritmi}, Universit\`a degli Studi di Trieste.
MDC, EF and SV were partially supported by the Gruppo Nazionale per l'Analisi
Matematica, la Probabilit\`a e le loro Applicazioni (GNAMPA) of the Istituto
Nazionale di Alta Matematica (INdAM). EF was partially supported by the Research Project FIR 2013 \textit{Geometrical and qualitative aspects of PDE's}.
\bibliographystyle{plain}

\end{document}